\newtheorem{theorem}{Theorem}
\newtheorem{lemma}{Lemma}
\theoremstyle{definition}
\theoremstyle{remark}
\newcommand{\cF}{\mathcal{F}}
\title{A remark on the $t$-intersecting Erd\H{o}s-Ko-Rado theorem}
\author{William Linz\thanks{University of South Carolina, Columbia, SC, USA. ({\tt wlinz@mailbox.sc.edu}). Partially supported by NSF RTG Grant DMS 2038080.}}
\date{\today}
\begin{document}
\maketitle

\begin{abstract}
The $t$-intersecting Erd\H{o}s-Ko-Rado theorem is the following statement: if $\mathcal{F} \subset \binom{[n]}{k}$ is a $t$-intersecting family of sets and $n\ge (t+1)(k-t+1)$, then $|\cF| \le \binom{n-t}{k-t}$. The first proof of this statement for all $t$ was a linear algebraic argument of Wilson. Earlier, Schrijver had proven the $t$-intersecting Erd\H{o}s-Ko-Rado theorem for sufficiently large $n$ by a seemingly different linear algebraic argument motivated by Delsarte theory. In this note, we show that the approaches of Schrijver and Wilson are in fact equivalent.
\end{abstract}

For integers $n > k > 0$, $\binom{[n]}{k}$ denotes the family of $k$-element subsets of $[n]:=\{1, \ldots, n\}$. A family of sets $\cF\subset \binom{[n]}{k}$ is \emph{$t$-intersecting} if $|F\cap F'| \ge t$ for all $F, F'\in \mathcal{F}$. The maximum size of a $t$-intersecting family $\mathcal{F} \subset \binom{[n]}{k}$ is given by the $t$-intersecting Erd\H{o}s-Ko-Rado theorem, which was proved by Erd\H{o}s, Ko and Rado~\cite{EKR} for sufficiently large $n$, in the optimal range $n\ge (t+1)(k-t+1)$ for $t\ge 15$ by Frankl~\cite{Frankl}, and finally by Wilson~\cite{Wil} for all $t$. 

\begin{theorem}[$t$-intersecting Erd\H{o}s-Ko-Rado]\label{thm:tintekrset}
Let $n, k, t$ be integers. Suppose $\cF\subset \binom{[n]}{k}$ is a $t$-intersecting family with $n\ge (t+1)(k-t+1)$. Then, 
\[|\cF| \le \binom{n-t}{k-t}.\]
If $n > (t+1)(k-t+1)$, equality holds only if $\cF\cong\{F\in \binom{[n]}{k}: [t] \subset F\}$. 
\end{theorem}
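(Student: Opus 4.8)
The plan is to recast the problem in the Johnson association scheme and to prove the bound through a weighted Hoffman ratio bound, which is the feature common to both Schrijver's Delsarte-theoretic argument and Wilson's. Write $N = \binom{n}{k}$ and let $\cG$ be the graph on $\binom{[n]}{k}$ with $F \sim F'$ whenever $|F \cap F'| < t$; then a $t$-intersecting family is precisely an independent set of $\cG$, and the assertion $|\cF| \le \binom{n-t}{k-t}$ becomes $\alpha(\cG) \le \binom{n-t}{k-t}$. I would introduce the scheme's adjacency matrices $A_0, \dots, A_k$, where $(A_i)_{F,F'} = 1$ exactly when $|F \cap F'| = i$, together with their standard simultaneous eigenspace decomposition $V_0 \perp V_1 \perp \cdots \perp V_k$ with $\dim V_\ell = \binom{n}{\ell} - \binom{n}{\ell - 1}$; the eigenvalue of $A_i$ on $V_\ell$ is the Eberlein number $E_i(\ell)$, an explicit polynomial in $\ell$.

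The engine is the following weighted ratio bound. For real weights $w_0, \dots, w_{t-1}$ set $\cM = \sum_{i=0}^{t-1} w_i A_i$. This matrix has zero diagonal and is supported on the edges of $\cG$, so $\mathbf{1}_{\cF}^{\top} \cM \mathbf{1}_{\cF} = 0$ for every independent set $\cF$. Since $\cM$ has constant row sum $d := \sum_i w_i \binom{k}{i}\binom{n-k}{k-i}$ (its eigenvalue on $V_0$) and least eigenvalue $\tau := \min_{1 \le \ell \le k} \sum_i w_i E_i(\ell) < 0$, decomposing $\mathbf{1}_{\cF}$ along $\mathbf{1}$ and $\mathbf{1}^{\perp}$ yields $\alpha(\cG) \le N \cdot \frac{-\tau}{d - \tau}$.

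The weights are not free: for this bound to reach the target value $\binom{n-t}{k-t} = |\cS|$ attained by the $t$-star $\cS = \{F : [t] \subseteq F\}$, the balanced part $\mathbf{1}_{\cS} - \frac{|\cS|}{N}\mathbf{1}$ must lie in the least eigenspace. As $\mathbf{1}_{\cS}$ has a nonzero projection onto each of $V_1, \dots, V_t$ and none onto $V_{\ell}$ for $\ell > t$, this forces $\lambda_1(\cM) = \cdots = \lambda_t(\cM) = \tau$, where $\lambda_\ell(\cM) = \sum_i w_i E_i(\ell)$. These $t - 1$ linear equations determine $(w_0, \dots, w_{t-1})$ up to scaling; I would then verify by direct computation with the Eberlein numbers that the resulting ratio equals exactly $\binom{n-t}{k-t}$. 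This is the Delsarte linear program made explicit, and it is where the Schrijver and Wilson constructions coincide.

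The main obstacle is feasibility: one must check that the common value $\tau$ is genuinely the minimum, i.e. that $\lambda_\ell(\cM) \ge \tau$ for every $t < \ell \le k$. Reducing this family of inequalities on Eberlein polynomials to a single clean condition is the delicate step, and I expect it to be exactly here that the hypothesis $n \ge (t+1)(k-t+1)$ enters. Finally, for uniqueness when $n > (t+1)(k-t+1)$, the inequalities for $\ell > t$ become strict, so equality in the ratio bound forces $\mathbf{1}_{\cF} \in V_0 \oplus V_1 \oplus \cdots \oplus V_t$; equivalently $\cF$ is a Boolean family of ``degree at most $t$,'' and a short combinatorial argument then shows that a maximum $t$-intersecting family of this form must be a $t$-star, which completes the proof.
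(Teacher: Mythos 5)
Your architecture is precisely the one this note is concerned with: a weighted Hoffman ratio bound for a matrix $\cM=\sum_{i=0}^{t-1}w_iA_i$ in the Bose--Mesner algebra of the Johnson scheme, with the weights pinned down by forcing the least eigenvalue to be attained on all of $V_1,\ldots,V_t$ so that the $t$-star is tight. That is Schrijver's Delsarte-LP formulation, and Theorem~\ref{thm:somegathm} of the paper shows the matrix it produces is Wilson's $\Omega(n,k,t)$; indeed, the solution of your $t-1$ linear equations is (up to the normalization $\tau=-1$ and rescaling by valencies) the inner distribution $a_{k-i}$ of a $t$-$(n,k,1)$ design as in \eqref{akellieqn}. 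One step of yours can be streamlined: once the balanced part of $\mathbf{1}_{\cS}$ lies in the $\tau$-eigenspace, the identity $0=\mathbf{1}_{\cS}^{\top}\cM\,\mathbf{1}_{\cS}=\frac{|\cS|^2}{N}\,d+\tau\bigl(|\cS|-\frac{|\cS|^2}{N}\bigr)$ already forces the ratio bound to equal $|\cS|=\binom{n-t}{k-t}$, so no separate Eberlein computation is needed there. (You should also verify that your $t-1$ equations have a solution space of dimension exactly one, though this is routine.)

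The genuine gap is the step you explicitly defer: proving $\lambda_\ell(\cM)\ge\tau$ for all $t<\ell\le k$ when $n\ge(t+1)(k-t+1)$ --- equivalently, after normalizing $\tau=-1$, that $\cM+I$ is positive semidefinite. This is not a delicate finishing touch to be ``reduced to a single clean condition''; it is the entire mathematical content of the theorem. It is exactly what separated Schrijver's asymptotic result and Frankl's $t\ge15$ result from Wilson's exact theorem, and verifying it occupies the bulk of Wilson's eigenvalue analysis (the paper itself does not redo it either: it cites Wilson's computation that $\lambda_{\min}(\Omega(n,k,t))=-1$ while $\lambda_1=\binom{n}{k}\binom{n-t}{k-t}^{-1}-1$). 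Without that verification your argument proves nothing, since for $n$ below the threshold some $\lambda_\ell$ with $\ell>t$ drops below $\tau$ and the ratio bound fails --- as it must, because the theorem's conclusion is false there; in fact at $n=(t+1)(k-t+1)$ the eigenvalue on $V_{t+1}$ equals $\tau$ exactly, which is also why your strictness argument for uniqueness only works for $n$ strictly above the threshold. A smaller deferred debt: the claim that a maximum $t$-intersecting family with $\mathbf{1}_{\cF}\in V_0\oplus\cdots\oplus V_t$ must be a $t$-star is true but is not a ``short combinatorial argument''; making it precise requires real work beyond the spectral bound. So your proposal correctly reconstructs the Wilson--Schrijver framework at about the level of detail of the paper's own outline, but it reduces the theorem to its hardest step without carrying that step out.
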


Let $G(n, k, t)$ be the graph with vertex set $\binom{[n]}{k}$ and edge set $\{\{F, F'\}: |F\cap F'| < t\}$. By construction, $t$-intersecting families are independent sets in $G(n, k, t)$, so the $t$-intersecting Erd\H{o}s-Ko-Rado theorem can be restated as 
\[\alpha(G(n, k, t)) \le \binom{n-t}{k-t}.\]
Wilson's proof actually gives the stronger result that 
\[\vartheta(G(n, k, t)) = \binom{n-t}{k-t},\]
where $\vartheta(G)$ is the Lov\'asz number~\cite{Lov} of the graph $G$. It follows from the sandwich theorem of Lov\'asz that the Shannon capacity of $G(n, k, t)$ is also $\binom{n-t}{k-t}$ when $n\ge (t+1)(k-t+1)$. 

We briefly outline Wilson's proof. For a graph $G$ on $n$ vertices, a \emph{pseudoadjacency matrix} is an $n\times n$ matrix $B=(b_{ij})$ with constant row sums and such that $b_{ij} = 0$ when $i\not\sim j$. The Hoffman bound holds for any pseudoadjacency matrix of a graph; that is, if $B$ is a pseudoadjacency matrix of a graph $G$, and $B$ has eigenvalues $\lambda_1 \ge \ldots \ge \lambda_n$, then 
\[\alpha(G) \le \frac{-\lambda_n}{\lambda_1 - \lambda_n} n.\]
Wilson constructed the matrix \[\Omega(n, k, t) = \sum_{i=0}^{t-1}(-1)^{t-1-i}\binom{k-1-i}{k-t}\binom{n-k-t+i}{k-t}^{-1}D_{k-i},\]
where if $\alpha$ and $\beta$ are two $k$-subsets, then
\[(D_i)_{\alpha, \beta} = \binom{|\alpha \setminus \beta|}{i} = \binom{k-|\alpha \cap \beta|}{i}.\] It is easy to see that $\Omega(n, k, t)$ is a pseudoadjacency matrix for $G(n, k, t)$. Furthermore, Wilson showed that $\lambda_1(\Omega(n, k, t)) = \binom{n}{k}\binom{n-t}{k-t}^{-1} - 1$ and $\lambda_n(\Omega(n, k, t))= -1$, so the Erd\H{o}s-Ko-Rado theorem follows by an application of the Hoffman bound. 

The definition of the matrix $\Omega(n, k, t)$ is rather unmotivated in Wilson's proof. Godsil and Guo~\cite{GG} gave a different interpretation of how the matrix $\Omega(n, k, t)$ may be constructed based on a matrix defined in the book of Godsil and Meagher \cite[pg.~159]{GM}; the proof that the construction of Godsil and Meagher is equivalent to $\Omega(n, k, t)$ relies on Keevash's theorem~\cite{Kee} on the existence of $t$-$(n, k, 1)$ designs. The purpose of this note is to give an alternate construction of the matrix $\Omega(n, k, t)$ using an earlier formulation of Schrijver~\cite{Sch1978} that is itself motivated by results in Delsarte theory~\cite{Del} and to prove that the matrix constructed is the same as Wilson's without using Keevash's theorem. 

Schrijver~\cite{Sch1978} had in fact proved some years before Wilson that $\vartheta(G(n, k, t)) = \binom{n-t}{k-t}$ as $n \rightarrow \infty$. Schrijver's argument uses a linear programming formulation of the Lov\'asz number for association schemes \cite{Sch}, but the argument can be reframed as constructing a pseudoadjacency matrix of $G(n, k, t)$.  Let $A_i$ be the $\binom{[n]}{k} \times \binom{[n]}{k}$ matrix with rows and columns indexed by the sets in $\binom{[n]}{k}$, with $(A_i)_{(F, F')} =  1$  if $ |F\cap F'| = k -i$ and $(A_i)_{(F, F')} = 0$ otherwise.

Following Schrijver, we define the pseudoadjacency matrix 
\[S(n, k, t) = \sum_{i=0}^{t-1}\frac{a_{k-i}}{\binom{k}{k-i}\binom{n-k}{k-i}}A_{k-i},\]
where
\begin{equation}\label{akellieqn}
a_{k-i} = \frac{1}{\binom{n-t}{k-t}}\binom{k}{k-i}\sum_{j=0}^{k-i}(-1)^{k-i-j}\binom{k-i}{j}\binom{n-\min(k-j, t)}{n-k}.
\end{equation}

Our main theorem is that the matrix $S(n, k, t)$ is equivalent to Wilson's matrix. 

\begin{theorem}\label{thm:somegathm}
For integers $n > k > t > 0$, we have 
\[S(n, k, t) = \Omega(n, k, t).\]
\end{theorem}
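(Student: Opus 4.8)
The plan is to carry out the entire comparison inside the Bose--Mesner algebra of the Johnson scheme $J(n,k)$, which has $\{A_0,\dots,A_k\}$ as a linear basis. Two matrices in this algebra coincide exactly when their coordinates in this basis agree, so it suffices to expand both $S(n,k,t)$ and $\Omega(n,k,t)$ over $\{A_m\}$ and compare. The link between the two descriptions is the identity $\binom{k-|\alpha\cap\beta|}{i}=\sum_{m}\binom{m}{i}\,[\,|\alpha\cap\beta|=k-m\,]$, which reads $D_{k-i}=\sum_{m=0}^{k}\binom{m}{k-i}A_m$. Substituting this into Wilson's formula gives $\Omega(n,k,t)=\sum_{m}\omega_m A_m$ with
\[\omega_m=\sum_{i=0}^{t-1}(-1)^{t-1-i}\binom{k-1-i}{k-t}\binom{n-k-t+i}{k-t}^{-1}\binom{m}{k-i}.\]
Since $\binom{m}{k-i}=0$ unless $m\ge k-i\ge k-t+1$, only $A_{k-t+1},\dots,A_k$ appear, matching the support of $S(n,k,t)$ exactly. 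Thus Theorem~\ref{thm:somegathm} reduces to the scalar identities $\omega_m=a_m/\bigl(\binom{k}{m}\binom{n-k}{m}\bigr)$ for $k-t+1\le m\le k$.

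Next I would put the Schrijver coefficient into a workable shape. Unwinding \eqref{akellieqn}, the target identity for a fixed $m$ is
\[\binom{n-t}{k-t}\binom{n-k}{m}\,\omega_m=\sum_{j=0}^{m}(-1)^{m-j}\binom{m}{j}\binom{n-\min(k-j,t)}{n-k},\]
whose right-hand side is the $m$-th finite difference $\Delta^m g(0)$ of $g(j)=\binom{n-\min(k-j,t)}{n-k}$. The useful feature is that $g$ is piecewise a single binomial: for $j\le k-t$ we have $\min(k-j,t)=t$, so $g(j)=\binom{n-t}{k-t}$ is constant, whereas for $j\ge k-t$ we have $g(j)=\binom{n-k+j}{n-k}$, a polynomial of degree $n-k$, and the two pieces agree at $j=k-t$.

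The heart of the argument is evaluating $\Delta^m g(0)$. Subtracting the constant $\binom{n-t}{k-t}$ (which is killed by $\Delta^m$ since $m\ge 1$) and expanding the polynomial piece in the shifted basis $\binom{j-(k-t)}{s}$, one writes the nonconstant part of $g$ as a combination $\sum_{s\ge 1}\binom{n-t}{n-k-s}\,T_s$ of truncated binomials $T_s(j)=\binom{j-(k-t)}{s}\mathbf{1}[j\ge k-t]$. Because $\Delta T_s=T_{s-1}$ and $T_0$ is a step function, a short computation (using $k>t$, so that the jump sits at a positive argument) yields $\Delta^m T_s(0)=(-1)^{m-s+k-t}\binom{m-s-1}{k-t-1}$, collapsing the right-hand side to the single alternating sum $\Delta^m g(0)=(-1)^{m+k-t}\sum_{s\ge 1}(-1)^{s}\binom{n-t}{n-k-s}\binom{m-s-1}{k-t-1}$. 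It then remains to prove the pure binomial identity equating this sum with $\binom{n-t}{k-t}\binom{n-k}{m}\,\omega_m$.

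I expect this last identity to be the main obstacle. The two alternating sums are indexed incompatibly---one over $s$, the other over $i$---and Wilson's side carries the inverse binomial coefficients $\binom{n-k-t+i}{k-t}^{-1}$, so matching them will require clearing those denominators and applying a Vandermonde-type convolution, or equivalently comparing the two sides as coefficients of a common generating function in one auxiliary variable. As a consistency check and natural base case, when $t=1$ both sides collapse to $\binom{n-k-1}{k-1}^{-1}$, which can be verified by hand; this also suggests an induction on $t$ as a fallback if the direct binomial identity proves unwieldy. My expectation, though, is that the finite-difference evaluation above reduces everything to a one-line convolution identity, and that this is the cleanest path to $S(n,k,t)=\Omega(n,k,t)$.
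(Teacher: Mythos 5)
Your reduction is set up correctly, and it is worth noting that it is exactly the transpose of the paper's: the paper expands $S(n,k,t)$ into the $D$-basis via Lemma~\ref{lem:aidj} and matches coefficients of each $D_{k-i}$, which is \eqref{t2eqn}, whereas you expand $\Omega(n,k,t)$ into the $A$-basis via $D_{k-i}=\sum_{m}\binom{m}{k-i}A_{m}$ and must match coefficients of each $A_m$; since the change of basis is invertible and triangular, the two families of scalar identities carry the same content. Your intermediate steps check out: the right-hand side of your target identity is indeed $\Delta^m g(0)$ for $g(j)=\binom{n-\min(k-j,t)}{n-k}$, the expansion of the nonconstant part of $g$ in the truncated binomials $T_s$ is a correct Vandermonde expansion (it is the same expansion as \eqref{eqn:n-t} in the paper), and $\Delta^m T_s(0)=(-1)^{m-s+k-t}\binom{m-s-1}{k-t-1}$ is right for $1\le s\le m-1$, the terms $s\ge m$ vanishing because $k>t$ (so your sum over $s$ must be read as truncated at $s\le m-k+t$; when $k=t+1$ the unrestricted sum would include spurious nonzero terms). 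Your $t=1$ consistency check is also correct.

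The genuine gap is the final step, which you defer with ``I expect \ldots{} a one-line convolution.'' It is not one line. Clearing the inverse binomials via $\binom{n-t}{k-t}\binom{n-k}{k-i}=\binom{n-t}{k-i}\binom{n-k-t+i}{k-t}$ and using $\binom{n-k}{m}\binom{m}{k-i}=\binom{n-k}{k-i}\binom{n-2k+i}{m-k+i}$, your remaining identity becomes, for $k-t+1\le m\le k$,
\[\sum_{i=0}^{t-1}(-1)^{t-1-i}\binom{k-1-i}{k-t}\binom{n-t}{k-i}\binom{n-2k+i}{m-k+i}=(-1)^{m+k-t}\sum_{s=1}^{m-k+t}(-1)^{s}\binom{n-t}{k-t+s}\binom{m-s-1}{k-t-1},\]
a sum-equals-sum identity in which the $n$-dependence on the left is split between two binomials coupled through the index $i$. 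The natural attack --- expand $\binom{n-2k+i}{m-k+i}$ by Vandermonde in the basis $\binom{n-t}{u}$ and compare coefficients --- leaves coefficient sums of the form $[x^{\,t-1-(k+u-m)}]\,(1-x)^{-(k-t+1)}(1+x)^{m+k-t-u-1}$, which do not collapse term by term; establishing the identity requires the same kind of double-sum interchange, repeated Vandermonde applications, and telescoping cancellation that occupies the bulk of the paper's proof of \eqref{t2eqn} (the steps from \eqref{mi} through \eqref{sisimplified} and after). Indeed, since your coefficient system is the image of \eqref{t2eqn} under the basis change, proving it is in aggregate equivalent to the paper's computation, and your orientation arguably makes the endgame harder: the paper's target is a single closed-form term, while yours is another alternating sum. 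The induction-on-$t$ fallback is likewise not developed --- neither $a_m$ nor your $\omega_m$ satisfies an evident recursion in $t$. So as written, the proposal correctly reduces the theorem to a binomial identity of essentially undiminished difficulty and stops there; the architecture is sound and would yield the theorem once that identity is supplied, but the heart of the proof is missing.
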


Before we give the proof of Theorem~\ref{thm:somegathm}, we motivate the construction of the matrix $S(n, k, t)$ and remark on how it fits with many classic results about association schemes and $t$-intersecting Erd\H{o}s-Ko-Rado theorems. 

The matrices $\{A_i\}_{0\le i\le k}$ form a basis for the Bose-Mesner algebra of the Johnson scheme; in particular, $(A_i)_{F, F'} = 1$ if and only if the two sets $F, F'\subset \binom{[n]}{k}$ are at distance $i$ in the Johnson graph $J(n, k)$. For a family $Y \subset \binom{[n]}{k}$, the \emph{inner distribution} with respect to the Johnson scheme is the vector ${\bf e} = (e_0, e_1, \ldots, e_k)$, with entries given by \[e_i = \frac{\chi_Y^TA_i\chi_Y}{|Y|},\]
where $\chi_Y$ is the characteristic vector of the family $Y$. The numbers $a_{k-i}$ in \eqref{akellieqn} are the inner distribution of a $t$-$(n, k, 1)$ design when such a design exists, though of course the $a_{k-i}$ are defined for all $n$. It follows from Delsarte theory~\cite[Theorem 3.3]{Del} that if there exists a $t$-$(n, k, 1)$ design, then the matrix $S(n, k, t) + I$ is positive semidefinite. The challenge of proving the $t$-intersecting Erd\H{o}s-Ko-Rado theorem is to show that $S(n, k, t) + I$ is positive semidefinite for all $n\ge (t+1)(k-t+1)$. 

Erd\H{o}s-Ko-Rado theorems for $t$-intersecting families have been proven for many of the other infinite families of $Q$-polynomial association schemes, including the Hamming scheme~\cite{Moo} and the Grassmann scheme~\cite{FW}. Tanaka~\cite{Tan1, Tan2} has given a unified approach to deriving many of these Erd\H{o}s-Ko-Rado theorems though the lens of solving certain linear programming problems arising from Delsarte theory. In fact, Tanaka~\cite{Tan3} has constructed the optimal feasible solution to these linear programming problems and shown their uniqueness as applications of much more general results from the theory of Leonard systems.  The content of Theorem~\ref{thm:somegathm} is that the solution of the corresponding linear programming problem for the Johnson scheme comes from the inner distribution of a $t$-$(n, k, 1)$ design and that the matrix $\Omega(n, k, t)$ is simply the same matrix expressed in a different basis of the Bose-Mesner algebra of the Johnson scheme. 

Returning to the matrix $\Omega(n, k, t)$, it is well-known that the matrices $\{D_j\}_{0\le j\le k}$ form another basis for the Bose-Mesner algebra of the Johnson scheme. The following lemma relates the matrices $\{A_i\}_{0\le i\le k}$ and $\{D_j\}_{0\le j\le k}$ \cite[pg.121]{GM}.

\begin{lemma}\label{lem:aidj}
\[A_i = \sum_{r=i}^{k}(-1)^{r-i}\binom{r}{i}D_r.\]
\end{lemma}

We conclude by giving an elementary proof of Theorem~\ref{thm:somegathm} that only uses binomial coefficient manipulations. 

\begin{proof}[Proof of Theorem~\ref{thm:somegathm}]

It suffices to prove that for every $0\le i\le t-1$, 
\begin{equation}\label{t2eqn}
\sum_{j=i}^{t-1} \frac{a_{k-j}}{\binom{k}{k-j}\binom{n-k}{k-j}} (-1)^{j-i} \binom{k-i}{k-j} = (-1)^{t-1-i} \binom{k-1-i}{k-t}\binom{n-k-t+i}{k-t}^{-1}.
\end{equation}

Indeed, by Lemma~\ref{lem:aidj} we have that 
\[A_{k-i} = \sum_{r=k-i}^k(-1)^{r-k+i}\binom{r}{k-i} D_r,\]

so 
\begin{align*}
S(n, k, t) &= \sum_{i=0}^{t-1}\frac{a_{k-i}}{\binom{k}{k-i}\binom{n-k}{k-i}}A_{k-i}\\
& = \sum_{i=0}^{t-1}\frac{a_{k-i}}{\binom{k}{k-i}\binom{n-k}{k-i}}\sum_{r=k-i}^k(-1)^{r-k+i}\binom{r}{k-i} D_r\\
&= \sum_{i=0}^{t-1} D_{k-i} \sum_{j=i}^{t-1} \frac{a_{k-j}}{\binom{k}{k-j}\binom{n-k}{k-j}} (-1)^{j-i} \binom{k-i}{k-j}.
\end{align*}

We have by \eqref{akellieqn}
\begin{align*}
a_{k-j} &= \binom{k}{k-j}(-1)^{j+t}\binom{k-j-1}{k-t} + \frac{\binom{k}{k-j}}{\binom{n-t}{k-t}} \sum_{r=k-t+1}^{k-j}(-1)^{k-j-r}\binom{k-j}{r}\binom{n-k+r}{r}\\
&=  \binom{k}{k-j}(-1)^{j+t}\binom{k-j-1}{k-t} + \frac{\binom{k}{k-j}}{\binom{n-t}{k-t}}\sum_{r=1}^{t-j}(-1)^{t-r}\binom{k-j}{t-j-r}\binom{n-t+r}{k-t+r}.
\end{align*}

The left hand side of \eqref{t2eqn} is  
\[
S_i =\frac{1}{\binom{n-t}{k-t}} \sum_{j=i}^{t-1}(-1)^i\binom{k-i}{j-i}\left(\frac{(-1)^{t}\binom{n-t}{k-t}\binom{k-j-1}{k-t} + \sum_{r=1}^{t-j}(-1)^{t-r}\binom{k-j}{t-j-r}\binom{n-t+r}{k-t+r}}{\binom{n-k}{k-j}}\right)\]
Using the fact that $\binom{n-k}{k-i}\binom{k-i}{j-i}   = \binom{n-2k+j}{j-i}\binom{n-k}{k-j}$, we obtain
\begin{equation}\label{si}
S_i = \frac{(-1)^i}{\binom{n-t}{k-t}\binom{n-k}{k-i}}M_i
= \frac{(-1)^i}{\binom{n-t}{k-i}\binom{n-k-t+i}{k-t}}M_i
\end{equation}
where
\begin{equation}\label{mi}
M_i = \sum_{j=i}^{t-1}\binom{n-2k+j}{j-i}\left((-1)^{t}\binom{n-t}{k-t}\binom{k-j-1}{k-t} + \sum_{r=1}^{t-j}(-1)^{t-r}\binom{k-j}{t-j-r}\binom{n-t+r}{k-t+r}\right)
\end{equation}

By Vandermonde's identity,
\[(-1)^{t}\binom{n-t}{k-t}\sum_{j=i}^{t-1}\binom{n-2k+j}{j-i}\binom{k-j-1}{t-j-1} = (-1)^t\binom{n-t}{k-t}\binom{n-k-1}{t-i-1}.\]

Now consider the double sum
\[\sum_{j=i}^{t-1}\binom{n-2k+j}{j-i}\sum_{r=1}^{t-j}(-1)^{t-r}\binom{k-j}{t-j-r}\binom{n-t+r}{k-t+r}.\]

Interchanging the order of summation and using Vandermonde's identity again, this is 
\begin{align*}
&\sum_{r=1}^{t-i}(-1)^{t-1}\binom{n-t+r}{k-t+r} \sum_{j=i}^{t-r}\binom{k-j}{t-j-r}\binom{n-2k+j}{j-i}\\
&=\sum_{r=1}^{t-i}(-1)^{t-r}\binom{n-t+r}{k-t+r}\binom{n-k}{t-r-i}.
\end{align*}

So we have
\begin{equation}\label{mainsum}
M_i = (-1)^t\binom{n-t}{k-t}\binom{n-k-1}{t-i-1} + \sum_{r=1}^{t-i}(-1)^{t-r}\binom{n-t+r}{k-t+r}\binom{n-k}{t-r-i}.
\end{equation}

We now look at the term 
\begin{equation}\label{eqn:n-t+r}
 \sum_{r=1}^{t-i}(-1)^{t-r}\binom{n-t+r}{k-t+r}\binom{n-k}{t-r-i}.
 \end{equation}

By using Pascal's identity repeatedly (or by using Vandermonde's identity), we have
\begin{equation}\label{eqn:n-t}
\binom{n-t+r}{k-t+r} = \sum_{s=0}^{r}\binom{r}{s}\binom{n-t}{k-t+r-s}.
\end{equation}
Substituting \eqref{eqn:n-t} into \eqref{eqn:n-t+r}, the sum becomes 
\begin{equation}\label{eqn:n-tsub}
\sum_{r=1}^{t-i}(-1)^{t-r}\binom{n-k}{t-r-i}\sum_{s=0}^{r}\binom{r}{s}\binom{n-t}{k-t+r-s}.
\end{equation}
We now rearrange the double sum of \eqref{eqn:n-tsub} so that the sum is of the form $\sum_{m=0}\binom{n-t}{k-t+m}a_m$. The sum becomes 
\begin{equation}\label{eqn:n-tk-t+m}
\binom{n-t}{k-t}\sum_{r=1}^{t-i}(-1)^{t-r}\binom{n-k}{t-r-i} + \sum_{m=1}^{t-i}\binom{n-t}{k-t+m} \sum_{r=m}^{t-i}(-1)^{t-r}\binom{n-k}{t-r-i}\binom{r}{m}.
\end{equation}
We look at the first part of \eqref{eqn:n-tk-t+m}. We have
\begin{align*}
\binom{n-t}{k-t}\sum_{r=1}^{t-i}(-1)^{t-r}\binom{n-k}{t-r-i} 
&= \binom{n-t}{k-t}\sum_{r=0}^{t-i-1}(-1)^{r+i}\binom{n-k}{r}\\
&=\binom{n-t}{k-t}(-1)^{i}\sum_{r=0}^{t-i-1}(-1)^r\binom{n-k}{r}\\
&=(-1)^{i}\binom{n-t}{k-t}(-1)^{t-i-1}\binom{n-k-1}{t-i-1},
\end{align*}
which cancels out the first term of \eqref{mainsum}. Thus, the sum $M_i$ may be simplified as
\begin{equation}\label{sisimplified}
M_i = \sum_{m=1}^{t-i}\binom{n-t}{k-t+m} \sum_{r=m}^{t-i}(-1)^{t-r}\binom{n-k}{t-r-i}\binom{r}{m}.
\end{equation}
The inner sum of \eqref{sisimplified} may be simplified as 
\begin{align*}
\sum_{r=m}^{t-i}(-1)^{t-r}\binom{n-k}{t-r-i}\binom{r}{m} &= \sum_{r=0}^{t-m-i}(-1)^{r+i}\binom{n-k}{r}\binom{t-i-r}{m}\\
&= (-1)^i\sum_{r=0}^{t-m-i}\binom{k-n+r-1}{r}\binom{t-i-r}{t-i-r-m}\\
&=(-1)^i\binom{k-n+t-i-1}{t-i-m}\\ &= (-1)^i \frac{(k-n+t-i-1)\cdots (k-n+m)}{(t-i-m)!}.\\
\end{align*}
Now observe that 
\begin{equation}\label{eqn:n-tk-i}
\binom{n-t}{k-t+m} = \binom{n-t}{k-i} \frac{(k-t+m+1)\cdots(k-t+(t-i))}{(n-k-m)\cdots(n-k-(t-i)+1)}
\end{equation}

Substituting \eqref{eqn:n-tk-i} and the previous result on the inner sum into \eqref{sisimplified}, we obtain that
\begin{align*}
M_i &= (-1)^i\binom{n-t}{k-i}\sum_{m=1}^{t-i}\frac{(k-t+m+1)\cdots(k-t+(t-i))}{(n-k-m)\cdots(n-k-(t-i)+1)}\binom{k-n+t-i-1}{t-i-m}\\
&=(-1)^i\binom{n-t}{k-i}\sum_{m=1}^{t-i}(-1)^{t-i-m}\binom{k-i}{t-i-m}\\
&=(-1)^i\binom{n-t}{k-i}\sum_{m=0}^{t-i-1}(-1)^m\binom{k-i}{m}\\
&=(-1)^i\binom{n-t}{k-i}(-1)^{t-i-1}\binom{k-i-1}{t-i-1}\\
&=\binom{n-t}{k-i}(-1)^{t-1}\binom{k-i-1}{k-t}.
\end{align*}
It follows upon substitution into \eqref{si} that 
\[S_i = \frac{(-1)^i}{\binom{n-t}{k-i}\binom{n-k-t+i}{k-t}}\binom{n-t}{k-i}(-1)^{t-1}\binom{k-i-1}{k-t} = (-1)^{t-1-i} \binom{k-1-i}{k-t}\binom{n-k-t+i}{k-t}^{-1},\]
which proves \eqref{t2eqn} and completes the proof of Theorem~\ref{thm:somegathm}. 
\end{proof}

\section*{Acknowledgement}
I thank Hajime Tanaka for bringing the reference \cite{Tan3} to my attention.

\end{document}